\documentclass[11pt]{article} 
\usepackage{amsfonts,amsmath,latexsym,amssymb,mathrsfs,amsthm,comment}
\usepackage{graphicx}
\usepackage{xcolor}
\usepackage{booktabs}

\usepackage[colorlinks=true, linkcolor=black, citecolor=black]{hyperref}

\numberwithin{equation}{section}

\let\OLDthebibliography\thebibliography
\renewcommand\thebibliography[1]{
  \OLDthebibliography{#1}
  \setlength{\parskip}{1pt}
  \setlength{\itemsep}{0pt plus 0.0ex}
}

\def\numberlikeadb{\global\def\theequation{\thesection.\arabic{equation}}}
\numberlikeadb
\newtheorem{theorem}{Theorem}[section]
\newtheorem{lemma}[theorem]{Lemma}
\newtheorem{corollary}[theorem]{Corollary}

\newtheorem{remark}[theorem]{Remark}

\usepackage{lscape}
\usepackage{caption}
\usepackage{multirow}
\allowdisplaybreaks
\begin{document}

\title{
Closed-form evaluation of the cumulative distribution function and absolute moments of the variance-gamma distribution
%On the cumulative distribution function and absolute moments of the variance-gamma distribution
%Closed-form formulas for the cumulative distribution function and absolute moments of the variance-gamma distribution
}
\author{Robert E. Gaunt\footnote{Department of Mathematics, The University of Manchester, Oxford Road, Manchester M13 9PL, UK, robert.gaunt@manchester.ac.uk; heather.sutcliffe@manchester.ac.uk}\:\, and Heather L. Sutcliff$\mathrm{e}^{*}$}

\date{} 
\maketitle

\vspace{-5mm}

\begin{abstract}Closed-form formulas are derived for the cumulative distribution function and odd-order absolute moments of the variance-gamma distribution. As a special case, closed-form expressions are deduced for the cumulative distribution function and odd-order absolute central moments of the product of two correlated zero-mean normal random variables. 
\end{abstract}

\noindent{{\bf{Keywords:}}} Variance-gamma distribution; cumulative distribution function; absolute moments; product of correlated normal random variables; Kamp\'e de F\'eriet function

\noindent{{{\bf{AMS 2020 Subject Classification:}}} Primary 60E05; 62E15; Secondary  	33C70}

\section{Introduction}

The variance-gamma (VG) distribution with parameters $\nu > -1/2$, $0\leq|\beta|<\alpha$ and $\mu \in \mathbb{R}$ has probability density function (PDF)
\begin{equation}\label{vgpdf} f(x) = \frac{(\alpha^2-\beta^2)^{\nu+1/2}}{\sqrt{\pi}(2\alpha)^\nu \Gamma(\nu+1/2)} \mathrm{e}^{\beta (x-\mu)}|x-\mu|^{\nu}K_{\nu}\big(\alpha|x-\mu|\big), \quad x\in \mathbb{R},
\end{equation}
where $K_\nu(\cdot)$ denotes the modified Bessel function of the second kind (see \cite[Chapter 10]{olver}). If a random variable $X$ has PDF (\ref{vgpdf}), then we write $X\sim\mathrm{VG}(\nu,\alpha,\beta,\mu)$. The VG distribution is also referred to as the generalized Laplace distribution \cite{kkp01}, the Bessel function distribution \cite{m32} and the McKay Type II distribution \cite{ha04}. The VG distribution has been studied in the statistics literature since the work of \cite{p29} in 1929, and has been widely used in financial modelling following its introduction into the financial literature in the seminal papers \cite{madan2,madan}. Further applications are outlined in the recent review \cite{vg review}, along with a comprehensive account of the distributional theory of the VG distribution.

Whilst the distributional theory of the VG distribution is now well developed, a gap in the literature is that closed-form expressions for its cumulative distribution function (CDF) and absolute moments are available only for special parameter values (see \cite{g24} and \cite{g25}, respectively, and references therein). For general parameter values, the recent exact formulas obtained in \cite{g24} and \cite{g25} are expressed in terms of infinite series involving the modified Bessel function of the second kind and the modified Lommel function of the first kind rather than in closed form.

%Whilst the distributional theory of the VG distribution is now quite well developed, a clear gap in the literature is the fact that whilst closed-form formulas are available for the cumulative distribution function (CDF) and absolute moments for special parameter values (see \cite{g24} and \cite{g25}, respectively, and references therein), closed-form expressions are not available for these distributional properties for general parameter values. Exact formulas for the CDF and absolute moments of the VG distribution have recently been obtained in \cite{g24} and \cite{g25}, respectively; however, for general parameter values these expressions are not given in closed-form but rather in terms of an infinite series involving the modified Bessel function of the second kind and the modified Lommel function of the first kind. 

In this paper, we fill this gap by deriving closed-form formulas for the CDF and odd-order absolute moments of the VG distribution (see Theorems \ref{thm2.1} and \ref{thm2.2}), expressed in terms of the modified Bessel function of the second kind, the Gaussian hypergeometric function and the Kamp\'e de F\'eriet function. We restrict our attention to the problem of deriving a closed-form expression for the odd-order absolute moments because a closed-form expression for the even-order moments of the VG distribution is given in \cite[Proposition 2.1]{g25}. Our general formulas are obtained by evaluating the infinite series that appear in the formulas of  \cite{g24} and \cite{g25} for the CDF and odd-order absolute moments of the VG distribution (see Lemma \ref{lem2.3}).

An important special case of the VG distribution is the product of two correlated zero-mean normal random variables and, more generally, the sum of independent copies of such random variables \cite{g19}. Since the work of \cite{craig,wb32} in the 1930s, these distributions have received much interest in the statistics literature and have found numerous applications throughout the mathematical sciences; see \cite{g22,np16} for an overview of application areas and basic distributional theory. Since the product of two correlated zero-mean normal random variables is VG distributed, we obtain closed-form formulas for the CDF and odd-order absolute central moments of this distribution, which we present in Corollaries \ref{cor1} and \ref{cor2}.

\section{Results and proofs}
In the following two theorems, we provide closed-form formulas for the CDF and odd-order absolute moments of the VG distribution for general parameter values. Our formulas are expressed in terms of the modified Bessel function of the second kind, the  Gaussian hypergeometric function ${}_2F_1(a,b;c;x)$ (see \cite[Chapter 15]{olver}) 
%the Kamp\'e de F\'eriet function \cite{e78}, which is defined as
\begin{comment}
\begin{equation}
		F^{p:q\,;\,k}_{l:m\,;\,n}
		\!\left[
		\begin{array}{l}
			(a_1,\dots,a_p):(b_1,\dots,b_q);(c_1,\dots,c_k)
			\\[1mm]
			(\alpha_1,\dots,\alpha_l):(\beta_1,\dots,\beta_m);(\gamma_1,\dots,\gamma_n)
		\end{array}
		;\,
		x,y
		\right]
		=
		\sum_{r,s=0}^{\infty}
		\frac{
			\displaystyle
			\prod_{j=1}^{p}(a_j)_{r+s}
			\prod_{j=1}^{q}(b_j)_r
			\prod_{j=1}^{k}(c_j)_s
		}{
			\displaystyle
			\prod_{j=1}^{l}(\alpha_j)_{r+s}
			\prod_{j=1}^{m}(\beta_j)_r
			\prod_{j=1}^{n}(\gamma_j)_s
		}
		\frac{x^r}{r!}
		\frac{y^s}{s!}.\label{kdf}
	\end{equation}
\end{comment}
and the Kamp\'e de F\'eriet function \cite{k37}, which, in the notation of \cite[p.\ 423, equation (26)]{lond}, is defined by
\begin{align}\label{kdf} \mathrm{F}^{H:A;B}_{G:C;D}
	\Bigg[
	\begin{array}{c@{\ :\ }c@{\ ;\ }c}
		(h_H) & (a_A) & (b_B)\\
		(g_G) & (c_C) & (d_D)
	\end{array}
	\Bigg| x,y
	\Bigg]=\sum_{m=0}^\infty\sum_{n=0}^\infty\frac{((h_H))_{m+n}((a_A))_m((b_B))_n}{((g_G))_{m+n}((c_C))_m((d_D))_n}\frac{x^m}{m!}\frac{y^n}{n!}, 
   \end{align}
whenever the double series converges. Here, we write $(h_H)$ for the sequence $(h_1, h_2, \ldots , h_H )$, and $((h_H ))_n$ is defined by the product of Pochhammer symbols $((h_H))_n:=(h_1)_n(h_2)_n\cdots(h_H)_n$, $n\geq0$. We employ the convention that $((h_H))_0=1$. 
For further details on the Kamp\'e de F\'eriet function, we refer the reader to \cite[pp.\ 26--33]{sii}. We shall also let $\mathrm{sgn}(x)$ denote the sign function, $\mathrm{sgn}(x)=-1$ for $x<0$, $\mathrm{sgn}(0)=0$, $\mathrm{sgn}(x)=1$ for $x>0$. Finally, in order to state our results in a more compact form, we shall denote, for $|a|<1$, $b\geq1$, $\nu>-1/2$ and $y>0$,
\begin{align}
	S(a,b,\nu,y)&:=\frac{y^{\nu+b}}{2^{\nu+b-1}}\Bigg\{\frac{2}{b}\Bigg(K_\nu(y) 
	\mathrm{F}^{0:2;1}_{2:1;0}
	\Bigg[
	\begin{array}{c@{\;}c@{\;}c@{\;}c@{\;}c}
		- & : & \left(\frac{b}{2},\nu+\frac{b}{2}\right) & ; & 1 \\[2pt]
		\left(\frac{b+2}{2},\nu+\frac{b}{2}\right) & : & \frac{1}{2} & ; & -
	\end{array}
	\Bigg|
	\frac{a^2y^2}{4},\frac{y^2}{4}
	\Bigg]\nonumber\\
	&\quad+\frac{yK_{\nu-1}(y)}{2\nu+b}\mathrm{F}^{0:2;1}_{2:1;0} \Bigg[ \begin{array}{c@{\;}c@{\;}l@{\;}c@{\;}c}
		- &  : &  \left(\frac{b}{2},\nu+\frac{b}{2}\right) & ; & 1 \\[2pt]
		\left(\frac{b+2}{2},\nu+\frac{b+2}{2}\right) & : & \frac12 \, & ; &  -
	\end{array}\Bigg| \frac{a^2y^2}{4}, \frac{y^2}{4} \Bigg]\Bigg)\nonumber\\
	&\quad+\frac{2ay}{b+1}\Bigg(K_\nu(y)\mathrm{F}^{0:2;1}_{2:1;0}
	\Bigg[
	\begin{array}{c@{\;}c@{\;}c@{\;}c@{\;}c}
		- & : & \left(\frac{b+1}{2},\nu+\frac{b+1}{2}\right) & ; & 1 \\[2pt]
		\left(\frac{b+3}{2},\nu+\frac{b+1}{2}\right) & : & \frac{3}{2} & ; & -
	\end{array}
	\Bigg|
	\frac{a^2y^2}{4},\frac{y^2}{4}
	\Bigg]\nonumber\\
	&\quad+\frac{yK_{\nu-1}(y)}{2\nu+b+1}\mathrm{F}^{0:2;1}_{2:1;0}
	\Bigg[
	\begin{array}{c@{\;}c@{\;}c@{\;}c@{\;}c}
		- & : & \left(\frac{b+1}{2},\nu+\frac{b+1}{2}\right) & ; & 1 \\[2pt]
		\left(\frac{b+3}{2},\nu+\frac{b+3}{2}\right) & : & \frac{3}{2} & ; & -
	\end{array}
	\Bigg|
	\frac{a^2y^2}{4},\frac{y^2}{4}
	\Bigg]\Bigg)\Bigg\}.  \nonumber
\end{align}
For $y=0$, we define $S(a,b,\nu,0):=0$ by continuous extension. With this notation, we are ready to state our main results.
    
\begin{theorem}\label{thm2.1}Let $X\sim\mathrm{VG}(\nu,\alpha,\beta,\mu)$, where $\nu>-1/2$, $0\leq|\beta|<\alpha$, $\mu\in\mathbb{R}.$ Then, for 
$x\in\mathbb{R}$,
	\begin{align}
	%	F_X(x):=\mathbb{P}(X\leq x)&=\frac{1}{2}-\frac{\Gamma(\nu+1)}{\sqrt{\pi}\Gamma(\nu+1/2)}\frac{\beta}{\alpha}\bigg(1-\frac{\beta^2}{\alpha^2}\bigg)^{\nu+1/2}{}_2F_{1}\bigg(1,\nu+1;\frac32;\frac{\beta^2}{\alpha^2}\bigg)\nonumber\\
    %    &\quad+\mathrm{sgn}(x-\mu)\frac{(1-\beta^2/\alpha^2)^{\nu+1/2}}{2\sqrt{\pi}\Gamma(\nu+1/2)} S\bigg(\frac{2\beta}{\alpha}\mathrm{sgn}(x-\mu),1,\nu,\alpha|x-\mu|\bigg). \label{ncdf} 
  %      \\
        F_X(x):=\mathbb{P}(X\leq x)&=\frac{1}{2}-\frac{(1-\beta^2/\alpha^2)^{\nu+1/2}}{\sqrt{\pi}\Gamma(\nu+1/2)}\Bigg\{\frac{\beta}{\alpha}\Gamma(\nu+1)\,{}_2F_{1}\bigg(1,\nu+1;\frac32;\frac{\beta^2}{\alpha^2}\bigg)\nonumber\\
       &\quad-\frac{\mathrm{sgn}(x-\mu)}{2}S\bigg(\frac{\beta}{\alpha}\mathrm{sgn}(x-\mu),1,\nu,\alpha|x-\mu|\bigg)\Bigg\}. \nonumber
        %\label{ncdf}
	\end{align}
 \begin{comment}Fox-Wright function and the Kamp\'{e} de F\'{e}riet function as follows
	\begin{align}
		{}_p\Psi_q \!\left[
		\begin{matrix}
			(a_1,A_1),\ldots,(a_p,A_p)\\
			(b_1,B_1),\ldots,(b_q,B_q)
		\end{matrix}
		; z
		\right]
		=
		\sum_{k=0}^{\infty}
		\frac{
			\prod_{j=1}^{p}\Gamma(a_j+A_j k)
		}{
			\prod_{j=1}^{q}\Gamma(b_j+B_j k)
		}
		\frac{z^k}{k!},
	\end{align}
\end{comment}
\end{theorem}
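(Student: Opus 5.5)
By translation invariance we may take $\mu=0$. We also restrict to $x\geq0$ and then use the reflection $X\mapsto -X$ (which sends $\beta\mapsto-\beta$) to obtain $x<0$. This reflection is why $\mathrm{sgn}(x)$ appears both outside $S$ and inside its first argument.

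For $x\ge0$ we split
\[
F_X(x)=\mathbb{P}(X\le 0)+\int_0^x f(t)\,\mathrm{d}t.
\]

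\textbf{The constant term.} We compute $\mathbb{P}(X\le0)$ from the Laplace-type integral $\int_0^\infty \mathrm{e}^{-\beta t}t^\nu K_\nu(\alpha t)\,\mathrm{d}t$. Standard tables give this as a ${}_2F_1$ in $\beta/\alpha$. Writing $\frac12$ as half the total mass and isolating the part that is odd in $\beta$, a quadratic transformation should produce the term
\[
\frac{\beta}{\alpha}\Gamma(\nu+1)\,{}_2F_1\Big(1,\nu+1;\tfrac32;\tfrac{\beta^2}{\alpha^2}\Big).
\]
Alternatively, this constant can be taken directly from the known CDF formula of \cite{g24} evaluated at $x=\mu$.

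\textbf{The integral term.} After substituting $y=\alpha t$, the integral becomes a constant times
\[
\int_0^{\alpha x}\mathrm{e}^{(\beta/\alpha)u}u^\nu K_\nu(u)\,\mathrm{d}u .
\]
We expand $\mathrm{e}^{au}=\sum_k a^ku^k/k!$ with $a=\beta/\alpha$ and split into even and odd $k$. This reduces the problem to indefinite integrals
\[
\int_0^y u^{\nu+b-1+2m}K_\nu(u)\,\mathrm{d}u,\qquad b=1 \text{ (even $k$)},\ b=2 \text{ (odd $k$)}.
\]
These are exactly the series representation in \cite{g24}, and the key lemma (Lemma \ref{lem2.3}) evaluates them in closed form. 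Concretely, each such integral is given by the known formula
\[
\int_0^y u^{\mu}K_\nu(u)\,\mathrm{d}u \propto y^{\mu+1}\big[K_\nu(y)\,s_1(y)+K_{\nu-1}(y)\,s_2(y)\big],
\]
where $s_1,s_2$ are ${}_2F_3$-type (Lommel-function) series in $y^2/4$, with prefactors $1/(\mu+\nu+1)$.

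We substitute these series and interchange the order of summation. The sum over $m$ with coefficient $a^{2m}/(2m)!$, rewritten using $(2m)!=4^m m!(1/2)_m$, gives the $x$-variable $a^2y^2/4$ with lower parameter $\tfrac12$. The inner Lommel index gives the $y$-variable $y^2/4$. The Pochhammer symbols in $m+n$ give the $(h_H)$ and $(g_G)$ entries. For the odd-$k$ terms, $(2m+1)!=4^m m!(3/2)_m$ gives the lower parameter $\tfrac32$ and the extra factor $ay$.

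Matching parameters should then yield precisely the four Kamp\'e de F\'eriet functions in $S(a,1,\nu,y)$. Collecting constants, the PDF normalisation $(\alpha^2-\beta^2)^{\nu+1/2}/(\sqrt\pi(2\alpha)^\nu\Gamma(\nu+1/2))$ combined with the factor $2^{-(\nu+b-1)}$ gives the stated prefactor. Justifying the interchange of summations is routine by absolute convergence, since the integrand is positive and the series are entire in $y$.

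\textbf{Main obstacle.} The hardest step is the bookkeeping that turns the double series into Kamp\'e de F\'eriet form. One must get the $m$- and $n$-dependence right in the Lommel-series coefficients, for instance $1/((\mu+\nu+1)/2)_{n}$-type factors with $\mu$ depending on $m$. This requires Pochhammer identities such as $(c)_{m+n}=(c)_m(c+m)_n$ to separate the factors into $((h_H))_{m+n}((a_A))_m((b_B))_n$ structure, and checking that the $b$-dependent parameters $(b/2,\nu+b/2)$ and $((b+2)/2,\nu+b/2)$ appear correctly.

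I would first verify the final identity in the case $\beta=0$. There the $x$-variable vanishes, the Kamp\'e de F\'eriet functions collapse to single hypergeometric series, and the result should reduce to the known symmetric CDF.
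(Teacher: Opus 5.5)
Your proposal is correct and is essentially the paper's proof. The paper quotes the series formula (\ref{ogcdf}) from \cite{g24}, which covers all $x$ via $\mathrm{sgn}(x-\mu)$, and applies Lemma \ref{lem2.3} with $a=\beta\,\mathrm{sgn}(x-\mu)/\alpha$, $b=1$, $y=\alpha|x-\mu|$; the lemma does exactly your bookkeeping (Lommel series expansion, the $j=2m$/$j=2m+1$ split using $(2m)!=4^m m!(1/2)_m$ and $(2m+1)!=4^m m!(3/2)_m$, and Pochhammer shifts), while your termwise integration and reflection for $x<0$ merely re-derive that cited input, and you can read the constant off (\ref{ogcdf}) at $x=\mu$ without any quadratic transformation.
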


\begin{theorem}\label{thm2.2}
	Let $X\sim\mathrm{VG}(\nu,\alpha,\beta,\mu)$, where $\nu>-1/2$, $0\leq|\beta|<\alpha$, $\mu\in\mathbb{R}$. Then, for every odd integer $r\geq1$,
	\begin{align}
		\mathbb{E}[|X|^r]&=\frac{2^r(1-\beta^2/\alpha^2)^{\nu+1/2}}{\sqrt{\pi}\alpha^r\Gamma(\nu+1/2)}\Bigg\{\sum_{k=0}^{(r-1)/2}\bigg(\frac{\alpha\mu}{2}\bigg)^{2k}\Gamma\bigg(\frac{r-2k+1}{2}\bigg)\Gamma\bigg(\nu+\frac{r-2k+1}{2}\bigg)\nonumber\\
		&\quad\times\bigg[\binom{r}{2k}{}_2F_{1}\bigg(\frac{r-2k+1}{2},\nu+\frac{r-2k+1}{2};\frac{1}{2};\frac{\beta^2}{\alpha^2}\bigg)\nonumber\\
		&\quad+\beta\mu\binom{r}{2k+1}{}_2F_1\bigg(\frac{r-2k+1}{2},\nu+\frac{r-2k+1}{2};\frac{3}{2};\frac{\beta^2}{\alpha^2}\bigg)\bigg]\nonumber\\
		&\quad-\sum_{k=0}^{r}\binom{r}{k}(-1)^k\bigg(\frac{\alpha|\mu|}{2}\bigg)^k S\bigg(-\frac{\beta}{\alpha}\mathrm{sgn}(\mu),r-k+1,\nu,\alpha|\mu|\bigg)\Bigg\}.\nonumber
	\end{align}
\end{theorem}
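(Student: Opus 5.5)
Our plan is to compute $\mathbb{E}[|X|^r]$ directly from the density (\ref{vgpdf}). We split the real line at the origin and reduce everything to two kinds of integrals: complete integrals over a half-line, which give the Gamma--${}_2F_1$ terms, and incomplete integrals over a finite interval, which give the $S$-terms. Throughout, the odd parity of $r$ is what makes the splitting useful.

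\textbf{Step 1 (sign identity).} Since $r$ is odd, $|x|^r = x^r\,\mathrm{sgn}(x)$. Suppose $\mu\ge 0$; the case $\mu<0$ follows by the symmetry $X\mapsto -X$, which sends $(\beta,\mu)$ to $(-\beta,-\mu)$, and this accounts for the $\mathrm{sgn}(\mu)$ in the final formula. Then
\[
\mathbb{E}|X|^r=\mathbb{E}[X^r]-2\,\mathbb{E}[X^r\mathbf 1_{\{X<0\}}].
\]
Substitute $x=\mu+t$ and expand $(\mu+t)^r$ binomially. This writes $\mathbb{E}[X^r]$ through the moments $\int_{\mathbb R} t^j e^{\beta t}|t|^\nu K_\nu(\alpha|t|)\,dt$.

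\textbf{Step 2 (complete integrals).} We split each full-line moment into its two half-lines. On each half-line we expand $e^{\pm\beta t}$ into its even part ($\cosh$) and odd part ($\sinh$). Only the parity matching $t^j$ survives, and termwise integration uses the standard Mellin transform
\[
\int_0^\infty t^{s-1}K_\nu(\alpha t)\,dt=2^{s-2}\alpha^{-s}\Gamma\big(\tfrac{s-\nu}{2}\big)\Gamma\big(\tfrac{s+\nu}{2}\big).
\]
With $s=j+\nu+2m+1$, summing over $m$ gives ${}_2F_1\big(\tfrac{j+1}{2},\nu+\tfrac{j+1}{2};\tfrac12\text{ or }\tfrac32;\beta^2/\alpha^2\big)$. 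We then regroup by $j=r-2k$ (even part) and $j=r-2k-1$ (odd part, carrying the extra factor $\beta\mu$). Together with the prefactor $(1-\beta^2/\alpha^2)^{\nu+1/2}$, this should produce the first double sum in the theorem.

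\textbf{Step 3 (incomplete integrals).} The correction term is $\int_{-\infty}^{0}x^r f(x)\,dx$. Substituting $t=x-\mu\in(-\infty,-\mu)$ and writing it as the half-line integral minus $\int_{-\mu}^{0}$, the half-line pieces combine with those of Step 2. The remaining pieces are finite-range integrals of the form
\[
\int_0^{\alpha\mu} u^{\nu+b-1}e^{-(\beta/\alpha)u}K_\nu(u)\,du,\qquad b=r-k+1,
\]
weighted by $\binom rk(-1)^k(\alpha\mu)^k$. This is exactly where we invoke Lemma~\ref{lem2.3}, which we take to assert that $S(a,b,\nu,y)$ equals (a normalisation of) $\int_0^y u^{\nu+b-1}e^{au}K_\nu(u)\,du$. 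It is the same evaluation that underlies Theorem~\ref{thm2.1} with $b=1$. Applying it with $a=-\beta/\alpha$ and $y=\alpha\mu$ yields the $S$-sum, and the factor $(\alpha|\mu|/2)^k$ absorbs the $2^{\nu+b-1}$ normalisation in $S$.

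\textbf{Main obstacle.} The hard part is the bookkeeping that makes the half-line contributions combine so that only $k\le (r-1)/2$ survive in the Gamma--${}_2F_1$ sum. Doubling the negative-side integral must cancel precisely the opposite-parity terms, and we must keep track of the signs coming from $(-1)^k$ and from $\mathrm{sgn}(\mu)$. We would first check $r=1$ with $\mu=0$, where the $S$-terms vanish, against the known formula for $\mathbb{E}|X|$ before fixing the general constants.
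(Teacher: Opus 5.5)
\textbf{The gap is in Step 3.} You take Lemma~\ref{lem2.3} to assert that $S(a,b,\nu,y)$ is a normalisation of $\int_0^y u^{\nu+b-1}e^{au}K_\nu(u)\,du$. The lemma says nothing about integrals. It only identifies the series $\mathcal{S}(a,b,\nu,y)$ of the functions $G_{\nu+j+b-1,\nu}(y)$ with $S(a,b,\nu,y)$, and nowhere does the paper connect $G_{\mu,\nu}$, $\mathcal{S}$ or $S$ to an integral of $K_\nu$. As written, your finite-range integrals are therefore never evaluated.

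The missing ingredient is the incomplete-integral formula
\[
\int_0^x t^{\mu}K_\nu(t)\,dt=2^{\mu-1}\Gamma\Big(\frac{\mu-\nu+1}{2}\Big)\Gamma\Big(\frac{\mu+\nu+1}{2}\Big)G_{\mu,\nu}(x),\qquad x>0,\quad \mu>|\nu|-1.
\]
This is where the Lommel functions $\tilde{t}_{\mu,\nu}$ enter, and it is how the $G$-series in (\ref{ogcdf}) and (\ref{ogam}) arise in \cite{g24,g25}. Using it, expand $e^{au}$ and integrate termwise on $[0,y]$. This is legitimate, since $u^{\nu+b-1}K_\nu(u)$ is integrable at $0$ for $b\ge1$, $\nu>-1/2$. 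You get $\int_0^y u^{\nu+b-1}e^{au}K_\nu(u)\,du=2^{\nu+b-2}\mathcal{S}(a,b,\nu,y)$. Only at this point does Lemma~\ref{lem2.3} apply, giving $2^{\nu+b-2}S(a,b,\nu,y)$. You must state this formula, or prove it, rather than attribute it to the lemma.

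\textbf{With that supplied, the rest of your plan is correct and the constants match.} The bookkeeping you flag as the main obstacle is actually immediate. Take $T=X-\mu$ with density $f_T$, and $\mu>0$. Then $\mathbb{E}[X^r]-2\int_{-\infty}^0(t+\mu)^rf_T(t)\,dt=\mathbb{E}[X^r\,\mathrm{sgn}(X-\mu)]$. This pairs $\cosh$ with the odd powers $t^{r-2k}$ and $\sinh$ with the even powers $t^{r-2k-1}$, giving the Gamma--${}_2F_1$ sum. For the $\sinh$ terms the ${}_2F_1$ parameters are $\frac{j+2}{2}$ and $\nu+\frac{j+2}{2}$ with $j=r-2k-1$, not $\frac{j+1}{2}$.

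The leftover term is $2\int_{-\mu}^0(t+\mu)^rf_T(t)\,dt$. After $t=-u$, $k\mapsto r-k$ and $(-1)^{r-k}=-(-1)^k$, it equals
\[
-\frac{2^r(1-\beta^2/\alpha^2)^{\nu+1/2}}{\sqrt{\pi}\alpha^r\Gamma(\nu+1/2)}\sum_{k=0}^{r}\binom{r}{k}(-1)^k\Big(\frac{\alpha\mu}{2}\Big)^k2^{k+1-\nu-r}\int_0^{\alpha\mu}u^{\nu+r-k}\mathrm{e}^{-\beta u/\alpha}K_\nu(u)\,du .
\]
By the identity above with $b=r-k+1$ and $a=-\beta/\alpha$, this is exactly the $S$-sum.

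So the repaired argument is a re-derivation of (\ref{ogam}), followed by the same application of Lemma~\ref{lem2.3} that makes up the paper's entire proof. The paper avoids the computation by quoting (\ref{ogam}) from \cite{g25}; your version is self-contained apart from the integral formula.
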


Before proving Theorems \ref{thm2.1} and \ref{thm2.2}, we recall the exact formulas of \cite{g24} and \cite{g25} for the CDF and odd-order absolute moments of the VG distribution.  The following exact formula for the CDF is given in \cite[Theorem 2.1]{g24}: for $x\in\mathbb{R}$,
	\begin{align}
		%F_X(x)&=\frac{1}{2}-\frac{\Gamma(\nu+1)}{\sqrt{\pi}\Gamma(\nu+1/2)}\frac{\beta}{\alpha}\bigg(1-\frac{\beta^2}{\alpha^2}\bigg)^{\nu+1/2}{}_2F_{1}\bigg(1,\nu+1;\frac32;\frac{\beta^2}{\alpha^2}\bigg)+\frac{(1-\beta^2/\alpha^2)^{\nu+1/2}}{2\sqrt{\pi}\Gamma(\nu+1/2)}\nonumber\\
	%	&\quad\times\sum_{k=0}^{\infty}\frac{(\mathrm{sgn}(x-\mu))^{k+1}}{k!}\bigg(\frac{2\beta}{\alpha}\bigg)^{k}\Gamma\bigg(\frac{k+1}{2}\bigg)\Gamma\bigg(\nu+\frac{k+1}{2}\bigg)G_{\nu+k,\nu}\big(\alpha|x-\mu|\big),\label{ogcdf} \\
   F_X(x)&=\frac{1}{2}-\frac{(1-\beta^2/\alpha^2)^{\nu+1/2}}{\sqrt{\pi}\Gamma(\nu+1/2)}\Bigg\{\frac{\beta}{\alpha}\Gamma(\nu+1)\,{}_2F_{1}\bigg(1,\nu+1;\frac32;\frac{\beta^2}{\alpha^2}\bigg)\nonumber\\
   &\quad-\frac{1}{2}\sum_{k=0}^{\infty}\frac{\big(\mathrm{sgn}(x-\mu)\big)^{k+1}}{k!}\bigg(\frac{2\beta}{\alpha}\bigg)^{k}\Gamma\bigg(\frac{k+1}{2}\bigg)\Gamma\bigg(\nu+\frac{k+1}{2}\bigg)G_{\nu+k,\nu}\big(\alpha|x-\mu|\big)\Bigg\}, \label{ogcdf}     
	\end{align}
where
		\begin{align}
		G_{\mu,\nu}(x)=x\big(K_\nu(x)\tilde{t}_{\mu-1,\nu-1}(x)+K_{\nu-1}(x)\tilde{t}_{\mu,\nu}(x)\big), \quad x>0, \label{g}
	\end{align}
with $G_{\mu,\nu}(0)=0$, and 
	\begin{align}
		\tilde{t}_{\mu,\nu}(x)=\frac{(x/2)^{\mu+1}}{\Gamma\big(\frac{\mu-\nu+3}{2}\big)\Gamma\big(\frac{\mu+\nu+3}{2}\big)}{}_1F_2\bigg(1;\frac{\mu-\nu+3}{2};\frac{\mu+\nu+3}{2};\frac{x^2}{4}\bigg)\label{t}
	\end{align}
is a normalisation of the modified Lommel function of the first kind that was introduced in \cite{g20}. Here ${}_1F_2(a;b,c;x)$ is a generalized hypergeometric function (see \cite[Chapter 16]{olver}). The following exact formula for odd-order absolute moments of the VG distribution is given in \cite[Theorem 2.3]{g25}: for every odd integer $r\geq1$,
	\begin{align}
		\mathbb{E}[|X|^r]&=\frac{2^r(1-\beta^2/\alpha^2)^{\nu+1/2}}{\sqrt{\pi}\alpha^r\Gamma(\nu+1/2)}\Bigg\{\sum_{k=0}^{(r-1)/2}\bigg(\frac{\alpha\mu}{2}\bigg)^{2k}\Gamma\bigg(\frac{r-2k+1}{2}\bigg)\Gamma\bigg(\nu+\frac{r-2k+1}{2}\bigg)\nonumber\\
		&\quad\times\bigg[\binom{r}{2k}{}_2F_{1}\bigg(\frac{r-2k+1}{2},\nu+\frac{r-2k+1}{2};\frac{1}{2};\frac{\beta^2}{\alpha^2}\bigg)\nonumber\\
		&\quad+\beta\mu\binom{r}{2k+1}{}_2F_1\bigg(\frac{r-2k+1}{2},\nu+\frac{r-2k+1}{2};\frac{3}{2};\frac{\beta^2}{\alpha^2}\bigg)\bigg]\nonumber\\
		&\quad-\sum_{k=0}^{r}\binom{r}{k}(-1)^k\bigg(\frac{\alpha|\mu|}{2}\bigg)^k\sum_{j=0}^{\infty}\frac{\big(-2\mathrm{sgn}(\mu)\beta/\alpha\big)^j}{j!}\Gamma\bigg(\frac{r+j-k+1}{2}\bigg)\nonumber\\
		&\quad\times\Gamma\bigg(\nu+\frac{r+j-k+1}{2}\bigg)G_{\nu+r+j-k,\nu}\big(\alpha|\mu|\big)\Bigg\}.\label{ogam}
	\end{align}

We will prove Theorems \ref{thm2.1} and \ref{thm2.2} by applying the following lemma to the expressions (\ref{ogcdf}) and (\ref{ogam}).     

\begin{lemma}\label{lem2.3} For $|a|<1$, $b\geq1$, $\nu>-1/2$ and $y\geq0$,
\begin{align}
\mathcal{S}(a,b,\nu,y):=\sum_{j=0}^\infty\frac{(2a)^j}{j!}\Gamma\bigg(\frac{j+b}{2}\bigg)\Gamma\bigg(\nu+\frac{j+b}{2}\bigg)G_{\nu+j+b-1,\nu}(y)=S(a,b,\nu,y). \label{lemfor} 
\end{align}     
\end{lemma}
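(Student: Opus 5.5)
The plan is to substitute the definitions (\ref{g}) and (\ref{t}) directly into the series defining $\mathcal{S}(a,b,\nu,y)$, expand each ${}_1F_2$ as a power series, and reorganise the resulting double sum into the four Kamp\'e de F\'eriet functions appearing in $S(a,b,\nu,y)$. The case $y=0$ is trivial, since $G_{\mu,\nu}(0)=0$ and $S(a,b,\nu,0)=0$ by definition, so we assume $y>0$. With $\mu=\nu+j+b-1$, formula (\ref{g}) gives
\begin{equation*}
G_{\nu+j+b-1,\nu}(y)=y\big(K_\nu(y)\tilde t_{\nu+j+b-2,\nu-1}(y)+K_{\nu-1}(y)\tilde t_{\nu+j+b-1,\nu}(y)\big).
\end{equation*}
By (\ref{t}), the Gamma arguments are $\frac{j+b+1}{2}$ and $\nu+\frac{j+b-1}{2}$ in the first Lommel function, and $\frac{j+b+2}{2}$ and $\nu+\frac{j+b+2}{2}$ in the second. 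Writing ${}_1F_2(1;p,q;z)=\sum_{n\ge0}\frac{z^n}{(p)_n(q)_n}$ and using $\Gamma(p)(p)_n=\Gamma(p+n)$, each $\tilde t$ becomes $\sum_n (y/2)^{\mu+1+2n}/[\Gamma(p+n)\Gamma(q+n)]$. The $K_\nu$ and $K_{\nu-1}$ parts are then double series in $(j,n)$. Each has coefficient $\frac{(2a)^j}{j!}\Gamma(\frac{j+b}{2})\Gamma(\nu+\frac{j+b}{2})$ divided by two Gammas evaluated at $n+$(linear in $j/2$).

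The key step is to split $j$ by parity, $j=2m$ and $j=2m+1$, since the Gamma factors involve $j/2$. This splitting produces the four terms of $S$: the first two (the $K_\nu$ and $K_{\nu-1}$ parts with argument $a^2y^2/4$ and bottom parameter $\tfrac12$) come from even $j$, and the last two (with prefactor $2ay/(b+1)$ and bottom parameter $\tfrac32$) come from odd $j$. For even $j=2m$, use $(2m)!=4^m m!(\tfrac12)_m$, so that $(2a)^{2m}/(2m)!=a^{2m}/(m!(\frac12)_m)$. The numerator Gammas become $\Gamma(\frac b2)(\frac b2)_m\,\Gamma(\nu+\frac b2)(\nu+\frac b2)_m$. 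For the $K_\nu$ part the denominator Gammas are $\Gamma(\frac{b+1}{2}+m+n)\,\Gamma(\nu+\frac{b-1}{2}+m+n)$, which should be rewritten as Pochhammer symbols in $m+n$. The power of $y$ is $y^{\nu+b+2m+2n}$ up to constants, which produces the variables $\frac{a^2y^2}{4}$ and $\frac{y^2}{4}$. The factor $n!$ needed in the Kamp\'e de F\'eriet normalisation (\ref{kdf}) is supplied by the upper parameter $1$ via $(1)_n=n!$. For odd $j=2m+1$, use $(2m+1)!=4^m m!(\tfrac32)_m$, which yields the bottom parameter $\tfrac32$ and the extra factor $2ay$.

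The main obstacle is matching the Gamma/Pochhammer parameters exactly with those displayed in $S$. The parameters shown, e.g.\ bottom $(\frac{b+2}{2},\nu+\frac b2)$ with prefactor $\frac2b$, suggest that the normalising constants combine via identities such as $\Gamma(\frac b2)/\Gamma(\frac{b+2}{2})=\frac2b$. They also suggest that a shift of index or a contiguous relation may be needed. The natural $\Gamma(\frac{b+1}{2}+m+n)$ from the $K_\nu$ term does not obviously equal $\Gamma(\frac{b+2}{2})(\frac{b+2}{2})_{m+n}$. I would first recheck the parameters of $\tilde t_{\mu-1,\nu-1}$ carefully, because $\frac{(\mu-1)-(\nu-1)+3}{2}=\frac{j+b+2}{2}$ and $\frac{(\mu-1)+(\nu-1)+3}{2}=\nu+\frac{j+b}{2}$. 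These give exactly $\frac{b+2}{2}+m$ and $\nu+\frac b2+m$ for even $j$, which matches the displayed formula; the numerator factor $\Gamma(\nu+\frac{j+b}{2})$ then cancels partially, leaving the $(\nu+\frac b2)$ pair. The last ingredient is justifying the rearrangement of the double series. This follows from absolute convergence: for $|a|<1$, the $j$-series converges geometrically because the Gamma ratios are bounded by $C^{m+n}/(m+n)!$-type factors. Hence Fubini applies, and the resulting Kamp\'e de F\'eriet series converge for all $y$.
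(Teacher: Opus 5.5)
Your proposal is correct and follows the paper's proof essentially step for step. You substitute the definitions of $G_{\mu,\nu}$ and $\tilde t_{\mu,\nu}$, expand the ${}_1F_2$ functions, and split $j$ by parity using $(2m)!=4^m m!(\frac12)_m$ and $(2m+1)!=4^m m!(\frac32)_m$. You then convert the Gamma ratios into Pochhammer symbols in $m+n$ to read off the four Kamp\'e de F\'eriet functions. Your added absolute-convergence justification for rearranging the double series is, if anything, more careful than the paper, which only cites absolute convergence of the $j$-series. In the write-up, discard your first computation of the parameters of $\tilde t_{\mu-1,\nu-1}$ and use the corrected values $\frac{j+b+2}{2}$ and $\nu+\frac{j+b}{2}$, since these are what give the prefactor $\frac{2}{b}$ and the lower parameters $(\frac{b+2}{2},\nu+\frac b2)$.
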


\begin{proof} For $y=0$, formula (\ref{lemfor}) follows immediately since $G_{\nu+j+b-1,\nu}(0)=S(a,b,\nu,0)=0$. We therefore suppose from now on that $y>0$. We note that, for $y>0$, the series (\ref{lemfor}) is absolutely convergent (see \cite[Remark 2.7]{g25}). %which shows that the series is absolutely convergent in the case that $b$ is a positive integer, with the extension to $b\geq1$ being clear.
We begin by applying the formulas (\ref{g}) and (\ref{t}) to write $\mathcal{S}(a,b,\nu,y)$ in terms of the generalized hypergeometric function:
	\begin{align}
		\mathcal{S}(a,b,\nu,y)	&=\frac{y^{\nu+b}}{2^{\nu+b-1}}\sum_{j=0}^{\infty}\frac{(ay)^j}{j!}\Gamma\bigg(\frac{j+b}{2}\bigg)\Gamma\bigg(\nu+\frac{j+b}{2}\bigg)\frac{1}{\Gamma\big(\frac{j+b+2}{2}\big)}\nonumber\\
		&\quad\times\Bigg(\frac{K_\nu(y)}{\Gamma\big(\nu+\frac{j+b}{2}\big)}{}_1F_2\bigg(1;\frac{j+b+2}{2},\nu+\frac{j+b}{2};\frac{y^2}{4}\bigg)\nonumber\\
		&\quad+\frac{yK_{\nu-1}(y)}{2\Gamma\big(\nu+\frac{j+b+2}{2}\big)}{}_1F_2\bigg(1;\frac{j+b+2}{2},\nu+\frac{j+b+2}{2};\frac{y^2}{4}\bigg)\Bigg). \nonumber
	\end{align}
Using the power series representation of the generalized hypergeometric function
%\begin{equation}
	%	\nonumber
	%	{}_pF_q(a_1,\ldots,a_p; b_1,\ldots,b_q;x)=\sum_{j=0}^\infty\frac{(a_1)_j\cdots(a_p)_j}{(b_1)_j\cdots(b_q)_j}\frac{x^j}{j!},
	%\end{equation}
	(see \cite[Eq.\ 16.2.1]{olver}), we obtain
	\begin{align}
		\mathcal{S}(a,b,\nu,y)	&=\frac{y^{\nu+b}}{2^{\nu+b-1}}\sum_{	j=0}^{\infty}\sum_{l=0}^{\infty}\frac{(ay)^j}{j!}\frac{(y^2/4)^l}{l!}\Gamma\bigg(\frac{j+b}{2}\bigg)\Gamma\bigg(\nu+\frac{j+b}{2}\bigg)\frac{(1)_l}{\Gamma\big(\frac{j+b+2}{2}\big)\big(\frac{j+b+2}{2}\big)_l}\nonumber\\
		&\quad\times
		\Bigg(\frac{K_\nu(y)}{\Gamma\big(\nu+\frac{j+b}{2}\big)\big(\nu+\frac{j+b}{2}\big)_l}+\frac{yK_{\nu-1}(y)}{2\Gamma\big(\nu+\frac{j+b+2}{2}\big)\big(\nu+\frac{j+b+2}{2}\big)_l}\Bigg). \nonumber
	\end{align}
	Splitting the $j$-sum into its even and odd parts by setting $j=2m$ and $j=2m+1$ gives that
	\begin{align}
		\mathcal{S}(a,b,\nu,y)	&=\frac{y^{\nu+b}}{2^{\nu+b-1}}\sum_{m=0}^{\infty}\sum_{l=0}^{\infty}{(a^2y^2)^m}\frac{(y^2/4)^l}{l!}\Bigg\{\Gamma\bigg(m+\frac{b}{2}\bigg)\Gamma\bigg(\nu+m+\frac{b}{2}\bigg)\nonumber\\
		&\quad\times\frac{(1)_l}{(2m)!\Gamma\big(m+1+\frac{b}{2}\big)\big(m+1+\frac{b}{2}\big)_l}\Bigg(\frac{K_\nu(y)}{\Gamma\big(\nu+m+\frac{b}{2}\big)\big(\nu+m+\frac{b}{2}\big)_l}\nonumber\\
		&\quad+\frac{yK_{\nu-1}(y)}{2\Gamma\big(\nu+m+1+\frac{b}{2}\big)\big(\nu+m+1+\frac{b}{2}\big)_l}\Bigg)+ay\Gamma\bigg(m+\frac{b+1}{2}\bigg)\Gamma\bigg(\nu+m+\frac{b+1}{2}\bigg)\nonumber\\
		&\quad\times\frac{(1)_l}{(2m+1)!\Gamma\big(m+\frac{b+3}{2}\big)\big(m+\frac{b+3}{2}\big)_l}\Bigg(\frac{K_\nu(y)}{\Gamma\big(\nu+m+\frac{b+1}{2}\big)\big(\nu+m+\frac{b+1}{2}\big)_l}\nonumber\\
		&\quad+\frac{yK_{\nu-1}(y)}{2\Gamma\big(\nu+m+\frac{b+3}{2}\big)\big(\nu+m+\frac{b+3}{2}\big)_l}\Bigg)
		\Bigg\}	.\label{mo1}
	\end{align}
	We can now express the above in terms of four Kamp\'{e} de F\'{e}riet functions. Below we include the argument for the first term; the other three terms are dealt with similarly. We have 
	\begin{align}
		&\sum_{m=0}^{\infty}\sum_{l=0}^{\infty}\frac{(a^2y^2)^m}{(2m)!}\frac{(y^2/4)^l}{l!}\Gamma\bigg(m+\frac{b}{2}\bigg)
		\frac{(1)_lK_\nu(y)}{\Gamma\big(m+1+\frac{b}{2}\big)\big(m+1+\frac{b}{2}\big)_l\big(\nu+m+\frac{b}{2}\big)_l}\nonumber\\
		&=K_\nu(y)\sum_{m=0}^{\infty}\sum_{l=0}^{\infty}\frac{\big(a^2y^2/4\big)^m}{m!(1/2)_m}\frac{(y^2/4)^l}{l!}
		\frac{(1)_l}{\big(m+\frac{b}{2}\big)\big(m+1+\frac{b}{2}\big)_l\big(\nu+m+\frac{b}{2}\big)_l}\nonumber\\
		&=\frac{2K_\nu(y)}{b}\sum_{m=0}^{\infty}\sum_{l=0}^{\infty}\frac{(a^2y^2/4)^m(y^2/4)^l}{m!(1/2)_ml!}\frac{(1)_l\big(\frac{b}{2}\big)_m}{\big(\frac{b}{2}+1\big)_m\big(m+1+\frac{b}{2}\big)_l\big(\nu+m+\frac{b}{2}\big)_l}\nonumber\\
		&=\frac{2K_\nu(y)}{b}\sum_{m=0}^{\infty}\sum_{l=0}^{\infty}\frac{(a^2y^2/4)^m(y^2/4)^l}{m!(1/2)_ml!}\frac{(1)_l\big(\frac{b}{2}\big)_m\big(\nu+\frac{b}{2}\big)_m}{\big(\frac{b}{2}+1\big)_{m+l}\big(\nu+\frac{b}{2}\big)_{m+l}}\nonumber\\
		&=\frac{2K_\nu(y)}{b}\mathrm{F}^{0:2;1}_{2:1;0}
		\Bigg[
		\begin{array}{c@{\;}c@{\;}c@{\;}c@{\;}c}
			- & : & \left(\frac{b}{2},\nu+\frac{b}{2}\right) & ; & 1 \\[2pt]
			\left(\frac{b}{2}+1,\nu+\frac{b}{2}\right) & : & \frac{1}{2} & ; & -
		\end{array}
		\Bigg|
		\frac{a^2y^2}{4},\frac{y^2}{4}
		\Bigg],\,\nonumber
	\end{align}
	where in the first step we used the standard identity $\Gamma(z+1)=z\Gamma(z)$ and the relation $(2m)!=2^{2m}\,m!(1/2)_m$, in the second step we used that $m+b/2=(b/2)(b/2+1)_m/(b/2)_m$, in the third step we used the relation $(c+d)_n=(c)_{d+n}/(c)_d$, and in the final step we used the definition (\ref{kdf}) of the Kamp\'{e} de F\'{e}riet function. Applying the same argument to the other three terms in (\ref{mo1}) yields the desired formula (\ref{lemfor}), completing the proof. 
    %gives the desired expression (\ref{lemfor}) for \(S(a,b,\nu,y)\), completing the proof. 
\end{proof}

\begin{proof}[Proof of Theorem \ref{thm2.1}] Apply Lemma \ref{lem2.3} to formula (\ref{ogcdf}) with $a=\beta \,\mathrm{sgn}(x-\mu)/\alpha$, $b=1$ and $y=\alpha|x-\mu|$.
\end{proof}

\begin{proof}[Proof of Theorem \ref{thm2.2}] Apply Lemma \ref{lem2.3} to formula (\ref{ogam}) with $a=-\beta\,\mathrm{sgn}(\mu)/\alpha$, $b=r-k+1$ and $y=\alpha|\mu|$.
\end{proof}

Consider the product $Z=UV$, where $(U,V)$ is a zero-mean bivariate normal random vector with positive variances $(\sigma_U^2,\sigma_V^2)$ and correlation coefficient $\rho\in(-1,1)$. Also, for a positive integer $n$, let $Z_1,\ldots,Z_n$ be independent copies of $Z$. It was shown in \cite{g19} that the product $Z$ and, more generally, the sum $S_n=\sum_{i=1}^{n}Z_i$ are VG distributed. In particular,
\begin{align}
	S_n\sim \mathrm{VG}\bigg(\frac{n-1}{2},\frac{1}{s(1-\rho^2)},\frac{\rho}{s(1-\rho^2)},0\bigg),\label{vg1}
\end{align}
where $s=\sigma_U\sigma_V$. From the distributional relation (\ref{vg1}), the following corollaries are immediate consequences of Theorems \ref{thm2.1} and \ref{thm2.2}. In Corollary \ref{cor2}, we provide closed-form formulas for the odd-order absolute central moments of $S_n$; we remark that a closed-form formula for the absolute moments of $S_n$ (with no centring) is given in \cite[Corollary 2.4]{g23}.  

\begin{corollary}\label{cor1} For $x\in\mathbb{R}$,
\begin{align}
F_{S_n}(x)&=\frac{1}{2}-\frac{\Gamma\big((n+1)/2\big)}{\sqrt{\pi}\Gamma(n/2)}\rho(1-\rho^2)^{n/2}\,{}_2F_1\bigg(1,\frac{n+1}{2};\frac{3}{2};\rho^2\bigg)\nonumber\\
&\quad+\frac{(1-\rho^2)^{n/2}}{2\sqrt{\pi}\Gamma(n/2)}\mathrm{sgn}(x) \,S\bigg(\rho\,\mathrm{sgn}(x),1,\frac{n-1}{2},\frac{|x|}{s(1-\rho^2)}\bigg).\label{en1}
\end{align}
\end{corollary}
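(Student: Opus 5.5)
Corollary \ref{cor1} follows by substituting the parameters from (\ref{vg1}) into Theorem \ref{thm2.1}; the only work is simplifying the resulting parameter expressions. By (\ref{vg1}), $S_n\sim\mathrm{VG}(\nu,\alpha,\beta,\mu)$ with
\[
\nu=\tfrac{n-1}{2},\qquad \alpha=\tfrac{1}{s(1-\rho^2)},\qquad \beta=\tfrac{\rho}{s(1-\rho^2)},\qquad \mu=0 .
\]
These are admissible: $\nu\geq0>-1/2$, and $0\le|\beta|<\alpha$ because $|\rho|<1$. So Theorem \ref{thm2.1} applies directly.

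Next we compute the quantities that appear in Theorem \ref{thm2.1}:
\[
\frac{\beta}{\alpha}=\rho,\qquad 1-\frac{\beta^2}{\alpha^2}=1-\rho^2,\qquad \nu+\tfrac12=\tfrac n2,\qquad \nu+1=\tfrac{n+1}{2},\qquad \alpha|x-\mu|=\frac{|x|}{s(1-\rho^2)} .
\]
Hence $(1-\beta^2/\alpha^2)^{\nu+1/2}=(1-\rho^2)^{n/2}$ and $\Gamma(\nu+1/2)=\Gamma(n/2)$. Also $\mathrm{sgn}(x-\mu)=\mathrm{sgn}(x)$, so the first argument of $S$ becomes $\rho\,\mathrm{sgn}(x)$.

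Finally we distribute the prefactor over the two terms in the braces.
\begin{itemize}
\item The ${}_2F_1$ term becomes
\[
-\frac{\Gamma\big((n+1)/2\big)}{\sqrt\pi\,\Gamma(n/2)}\,\rho\,(1-\rho^2)^{n/2}\,{}_2F_1\Big(1,\tfrac{n+1}{2};\tfrac32;\rho^2\Big).
\]
\item The $S$ term carries two minus signs, one in front of the prefactor and one in front of $\tfrac{\mathrm{sgn}(x-\mu)}{2}$. These combine into a plus, giving
\[
\frac{(1-\rho^2)^{n/2}}{2\sqrt\pi\,\Gamma(n/2)}\,\mathrm{sgn}(x)\,S\Big(\rho\,\mathrm{sgn}(x),1,\tfrac{n-1}{2},\tfrac{|x|}{s(1-\rho^2)}\Big).
\]
\end{itemize}
Together these give exactly (\ref{en1}).

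There is no real obstacle. The only points to check are this sign and that the third argument of $S$ is $\nu=(n-1)/2$, not $n/2$.
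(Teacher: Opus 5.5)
Your proposal is correct and is the paper's argument: the paper also calls Corollary \ref{cor1} immediate from Theorem \ref{thm2.1} and (\ref{vg1}). Your parameter substitution ($\beta/\alpha=\rho$, $\nu+\tfrac12=\tfrac n2$, $\nu+1=\tfrac{n+1}{2}$, $\mu=0$), the admissibility check, and the sign bookkeeping for the $S$ term all check out.
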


\begin{proof}
This is immediate from Theorem \ref{thm2.1} and the distributional relation (\ref{vg1}). 
\end{proof}

\begin{corollary}\label{cor2} For every odd integer $r\geq1$,
\begin{align}\mathbb{E}[|S_n-\mathbb{E}[S_n]|^r]&=\frac{(2s)^r(1-\rho^2)^{n/2+r}}{\sqrt{\pi}\Gamma(n/2)}\Bigg\{\sum_{k=0}^{(r-1)/2}\bigg(\frac{-n\rho}{2(1-\rho^2)}\bigg)^{2k}\Gamma\bigg(\frac{r-2k+1}{2}\bigg)\Gamma\bigg(\frac{r-2k+n}{2}\bigg)\nonumber\\
		&\quad\times\bigg[\binom{r}{2k}{}_2F_{1}\bigg(\frac{r-2k+1}{2},\frac{r-2k+n}{2};\frac{1}{2};\rho^2\bigg)\nonumber\\
		&\quad-\frac{n\rho^2}{(1-\rho^2)}\binom{r}{2k+1}{}_2F_1\bigg(\frac{r-2k+1}{2},\frac{r-2k+n}{2};\frac{3}{2};\rho^2\bigg)\bigg]\nonumber\\
		&\quad-\sum_{k=0}^{r}\binom{r}{k}\bigg(\frac{-n|\rho|}{2(1-\rho^2)}\bigg)^k S\bigg(|\rho|,r-k+1,\frac{n-1}{2},\frac{n|\rho|}{1-\rho^2}\bigg)\Bigg\}.\label{en2}
        \end{align}
\end{corollary}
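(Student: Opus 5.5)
The plan is a direct specialisation of Theorem \ref{thm2.2}, combined with the observation that centring a VG random variable only shifts its location parameter. First I would compute the mean: since $(U,V)$ is a zero-mean bivariate normal vector, $\mathbb{E}[UV]=\rho\sigma_U\sigma_V=\rho s$, so $\mathbb{E}[S_n]=n\rho s$. From the PDF (\ref{vgpdf}), $X\sim\mathrm{VG}(\nu,\alpha,\beta,\mu)$ implies $X-c\sim\mathrm{VG}(\nu,\alpha,\beta,\mu-c)$. Combining this with (\ref{vg1}) gives
\begin{align*}
S_n-\mathbb{E}[S_n]\sim\mathrm{VG}\bigg(\frac{n-1}{2},\frac{1}{s(1-\rho^2)},\frac{\rho}{s(1-\rho^2)},-n\rho s\bigg).
\end{align*}
I would then apply Theorem \ref{thm2.2} to $X=S_n-\mathbb{E}[S_n]$ with $\nu=(n-1)/2$, $\alpha=1/(s(1-\rho^2))$, $\beta=\rho/(s(1-\rho^2))$ and $\mu=-n\rho s$.

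The remaining work is to substitute and simplify each quantity in Theorem \ref{thm2.2}:
\begin{align*}
\frac{\beta}{\alpha}=\rho,\quad \Gamma\Big(\nu+\frac12\Big)=\Gamma\Big(\frac n2\Big),\quad \nu+\frac{r-2k+1}{2}=\frac{r-2k+n}{2},\quad \frac{\alpha\mu}{2}=\frac{-n\rho}{2(1-\rho^2)},
\end{align*}
together with $\alpha|\mu|=n|\rho|/(1-\rho^2)$ and $\beta\mu=-n\rho^2/(1-\rho^2)$. For the prefactor, $2^r(1-\beta^2/\alpha^2)^{\nu+1/2}/\alpha^r=(2s)^r(1-\rho^2)^{n/2+r}$. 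In the final sum, $(-1)^k(\alpha|\mu|/2)^k=\big(-n|\rho|/(2(1-\rho^2))\big)^k$.

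The one step needing care is the first argument of $S$. For $\rho\neq0$ we have $\mathrm{sgn}(\mu)=-\mathrm{sgn}(\rho)$, so $-\frac{\beta}{\alpha}\mathrm{sgn}(\mu)=\rho\,\mathrm{sgn}(\rho)=|\rho|$. For $\rho=0$ the value $\mathrm{sgn}(\mu)=0$ still agrees with $|\rho|=0$. Moreover, when $\rho=0$ both sides are consistent: $y=\alpha|\mu|=0$, so every $S$-term vanishes since $S(a,b,\nu,0)=0$, and the terms with $k\geq1$ vanish anyway. The even power $(\alpha\mu/2)^{2k}$ is insensitive to the sign of $\mu$, so it may be written with $-n\rho$ as in (\ref{en2}).

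Collecting these substitutions gives (\ref{en2}) term by term. I expect no genuine obstacle; the main risks are bookkeeping errors in the power of $(1-\rho^2)$ and the sign conventions in the $S$-terms.
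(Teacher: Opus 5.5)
Your proposal is correct and matches the paper's proof: you shift the location parameter to $-n\rho s$ using $\mathbb{E}[S_n]=n\rho s$ and then specialise Theorem \ref{thm2.2}. Your substitutions all check out, including $-\frac{\beta}{\alpha}\mathrm{sgn}(\mu)=|\rho|$ and the prefactor $(2s)^r(1-\rho^2)^{n/2+r}$, and the explicit $\rho=0$ check is a harmless addition to what the paper leaves implicit.
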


\begin{proof} From the basic fact that $\mathbb{E}[S_n]=n\rho s$, we have that
\[S_n-\mathbb{E}[S_n]\sim \mathrm{VG}\bigg(\frac{n-1}{2},\frac{1}{s(1-\rho^2)},\frac{\rho}{s(1-\rho^2)},-n\rho s\bigg),\]
and combining this distributional relation with Theorem \ref{thm2.2} yields formula (\ref{en2}).
\end{proof}

\begin{remark}
Setting $n=1$ in formulas (\ref{en1}) and (\ref{en2}) yields expressions for the CDF and odd-order absolute central moments of the product $Z$. For the CDF, we use that $\Gamma(1)=1$, $\Gamma(1/2)=\sqrt{\pi}$ and
\begin{equation}{}_2F_1\bigg(1,1;\frac{3}{2};z\bigg)=\frac{\sin^{-1}(\sqrt{z})}{\sqrt{z(1-z)}}, \quad 0\leq z<1,\label{ffffu}
\end{equation}
(see \cite[Eqs.\ 15.4.4 and 15.8.1]{olver}), where the right-hand side of (\ref{ffffu}) is interpreted by continuity at $z=0$, with value 1. This gives the simplification
\begin{align*}
F_Z(x)&=\frac{1}{2}-\frac{1}{\pi}\sin^{-1}(\rho)+\frac{\sqrt{1-\rho^2}}{2\pi}\mathrm{sgn}(x) \,S\bigg(\rho\,\mathrm{sgn}(x),1,0,\frac{|x|}{s(1-\rho^2)}\bigg), \quad x\in\mathbb{R}.   
\end{align*}
\end{remark}

\section*{Acknowledgements} Both authors are funded by EPSRC grant EP/Y008650/1.


\footnotesize

\begin{thebibliography}{99}
\addcontentsline{toc}{section}{References}

%\bibitem{barn1} Barndorff-Nielsen, O. E. and Stelzer, R. Absolute Moments of Generalized Hyperbolic Distributions and Approximate Scaling of Normal Inverse Gaussian L\'{e}vy Processes. \emph{Scand. J. Stat.} $\mathbf{32}$ (2005),  617--637.

\bibitem{craig} Craig, C. C. On the Frequency Function of $xy$. \emph{Ann. Math. Stat.} $\mathbf{7}$ (1936), 1--15.

%\bibitem{e78} Exton, H. {\it Handbook of Hypergeometric Integrals: Theory, Applications, Tables, Computer Programs}. Halsted Press, New York, 1978. 

\bibitem{vg review} Fischer, A., Gaunt, R. E. and Sarantsev, A. The Variance-Gamma Distribution: A Review. \emph{Stat. Sci.} $\mathbf{40}$ (2025), 235--258.

%\bibitem{g14} Gaunt, R.E. Variance-gamma approximation via Stein's method, \emph{Electron. J. Probab.} 19 (38) (2014).

\bibitem{g19} Gaunt, R. E. A note on the distribution of the product of zero mean correlated normal random variables. \emph{Stat. Neerl.} $\mathbf{73}$ (2019),  176--179.

\bibitem{g20} Gaunt, R. E.  Bounds for modified Lommel functions of the first kind and their ratios. \emph{J. Math. Anal. Appl.} $\mathbf{486}$ (2020), Art.\ 123893.

\bibitem{g22} Gaunt, R. E. The basic distributional theory for the product of zero mean correlated normal random variables. \emph{Stat. Neerl.} $\mathbf{76}$ (2022), 450--470.

\bibitem{g23} Gaunt, R. E. On the moments of the variance-gamma distribution. \emph{Stat. Probabil. Lett.} $\mathbf{201}$ (2023), Art.\ 109884.

\bibitem{g24} Gaunt, R. E. On the cumulative distribution function of the variance-gamma distribution. \emph{Bull. Aust. Math. Soc.} $\mathbf{110}$ (2024), 389--397.

\bibitem{g25} Gaunt, R. E. Absolute moments of the variance-gamma distribution. \emph{J. Math. Anal. Appl.} $\mathbf{543}$ (2025), Art.\ 128861.

\bibitem{ha04} Holm, H. and Alouini, M.--S. Sum and difference of two squared correlated Nakagami variates in connection with the McKay distribution. \emph{IEEE Trans. Commun.} $\mathbf{52}$ (2004),  1367--1376.

%\bibitem{jp21} Jankov Ma\v{s}irevi\'c, D. and Pog\'{a}ny, T. K. On new formulae for cumulative distribution function for McKay Bessel distribution. \emph{Commun. Stat. Theory} $\mathbf{50}$ (2021), 143--160.

\bibitem{k37} Kamp\'e de F\'eriet, J. La fonction hyperg\'eom\'etrique, M\'emorial des sciences math\'ematiques, vol. 85, Paris: Gauthier-Villars, 1937.

\bibitem{kkp01} Kotz, S., Kozubowski, T. J. and Podg\'{o}rski, K. \emph{The Laplace Distribution and Generalizations: A Revisit with Applications to Communications, Economics, Engineering, and Finance.} Springer, 2001.

\bibitem{madan2} Madan, D. B., Carr, P. P. and Chang, E. C. The variance gamma process and option pricing. \emph{Eur. Finance Rev.} $\mathbf{2}$ (1998), 79--105.

\bibitem{madan} Madan, D. B. and Seneta, E. The Variance Gamma (V.G.) Model for Share Market Returns. \emph{J. Bus.} $\mathbf{63}$ (1990), 511--524.

\bibitem{m32} McKay, A. T. A Bessel function distribution. \emph{Biometrika} $\mathbf{24}$ (1932),  39--44.

\bibitem{np16} Nadarajah, S. and Pog\'any, T. K. On the distribution of the product of correlated normal random variables. \emph{C. R. Math.} $\mathbf{354}$ (2016), 201--204.

%\bibitem{nsg07} Nadarajah, S., Srivastava H. M. and Gupta, A. K. Skewed Bessel function distributions with application to rainfall data. \emph{Statistics} $\mathbf{41}$ (2007), 333--344.

%\bibitem{o21} Ogasawara, H. A non-recursive formula for various moments of the multivariate normal distribution with sectional truncation. \emph{J. Multivariate Anal.} $\mathbf{183}$ (2021), Art.\ 104792.

\bibitem{olver} Olver, F. W. J., Lozier, D. W., Boisvert, R. F. and Clark, C. W.  \emph{NIST Handbook of Mathematical Functions.} Cambridge University Press, 2010.

\bibitem{p29} Pearson, K., Jeffery, G. B. and Elderton, E. M. On the distribution of the first product moment-coefficient, in samples drawn from an indefinitely large normal population. \emph{Biometrika} $\mathbf{21}$ (1929), 164--193.

%\bibitem{scott} Scott, D. J., W\"urtz, D., Dong, C. and Tran, T. T.  Moments of the generalized hyperbolic distribution. \emph{Computation. Stat.} $\mathbf{26}$ (2011),  459--476.

\bibitem{sii} Srivastava, H. M. and Karlsson, P. W. \emph{Multiple Gaussian Hypergeometric Series}; Ellis Horwood Series: Mathematics and Its Applications;
Ellis Horwood Ltd.: Chichester, UK; Halsted Press [John Wiley \& Sons, Inc.]: New York, USA, 1985.

\bibitem{lond} Srivastava, H. M. and Panda, R. An integral representation for the product of two Jacobi polynomials. \emph{J. Lond. Math. Soc.} $\mathbf{12}$ (1976), 419--425.

%\bibitem{w12} Winkelbauer, A. Moments and Absolute Moments of the Normal Distribution. arXiv:1209.4340, 2012.

\bibitem{wb32} Wishart, J. and Bartlett, M. S. The distribution of second order moment statistics in a normal system. \emph{Proc. Camb. Philol. Soc.} $\mathbf{28}$ (1932), 455--459.

\end{thebibliography}
\end{document}